\def\th@plain{%
  \thm@notefont{}
  \itshape 
}
\def\th@definition{%
  \thm@notefont{}
  \normalfont 
}
\newtheorem{lemma}{Lemma}[section]
\newtheorem{remark-definition}[lemma]{Remark-Definition}
\newtheorem{theorem}[lemma]{Theorem}
\newtheorem{conjecture}[lemma]{Conjecture}
\newtheorem{proposition-conjecture}[lemma]{Proposition-conjecture}
\theoremstyle{definition}
\begin{document}

\title{A note on Jordan–Kronecker invariants of semi-direct sums of $\operatorname{sl}(n)$ with a commutative ideal}
\author{I.\,K.~Kozlov\thanks{No Affiliation, Moscow, Russia. E-mail: {\tt ikozlov90@gmail.com} }
}
\date{}

\maketitle

\begin{abstract} K.\,S.~Vorushilov described Jordan--Kronecker invariants for semi-direct sums $\operatorname{sl} \ltimes \left(\mathbb{C}^n\right)^k$ if $k > n$ or if $n$ is a multiple of $k$. We describe the Jordan--Kronecker invariants in the cases $n \equiv \pm 1 \pmod k$. \end{abstract}

\section{Main result}

Jordan--Kronecker invariants of Lie algebras were introduced by A.\,V.~Bolsinov and P.~Zhang in \cite{BolsZhang}. Consider a semi-direct sum $\mathfrak{g} = \operatorname{sl}(n) \ltimes \left(\mathbb{C}^n\right)^k$, given by the standard representation.   K.\,S.~Vorushilov~\cite{Vor2} calculated the Jordan--Kronecker invariants for $k > n$ or  $n$ a multiple of $k$. We calculate them in several remaining cases.

\begin{theorem} Consider the semi-direct sum $\mathfrak{g} = \operatorname{sl}(n) \ltimes \left(\mathbb{C}^n\right)^k$, given by the standard representation. Assume that \begin{equation} \label{Eq:KCond} k < n, \qquad n = kd + r,  \end{equation} where  \begin{equation} \label{Eq:R} r =1 \qquad \text{ or } \qquad r = k - 1.\end{equation} Then the Jordan--Kronecker invariants of $\mathfrak{g}$ consist of \[\operatorname{ind} \mathfrak{g} = k\] equal Kronecker blocks. The size of each Kronecker block is \[(d+1)(n+r) - 1.\]

\end{theorem}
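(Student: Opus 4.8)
\emph{Proof plan.}
The plan is to read off the Jordan--Kronecker invariants from the pencil of skew forms $A_{x}+\lambda A_{a}=A_{x+\lambda a}$ on $\mathfrak{g}$ for generic $x,a\in\mathfrak{g}^{*}$, using two standard facts: the number of Kronecker blocks equals $\operatorname{ind}\mathfrak{g}$, the sum of all block sizes equals $\dim\mathfrak{g}$; and (as in \cite{BolsZhang}) there are no Jordan blocks if and only if $\operatorname{Sing}\mathfrak{g}=\{y:\operatorname{rank}A_{y}<\dim\mathfrak{g}-\operatorname{ind}\mathfrak{g}\}$ has codimension $\ge 2$ in $\mathfrak{g}^{*}$. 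Writing $\mathfrak{g}=\operatorname{sl}(n)\ltimes W$ with $W=\left(\mathbb{C}^{n}\right)^{k}$ and a point of $\mathfrak{g}^{*}$ as $(\xi,\varphi)\in\operatorname{sl}(n)^{*}\oplus W^{*}$, the bracket matrix has the block form $A_{(\xi,\varphi)}=\bigl(\begin{smallmatrix}A^{\operatorname{sl}(n)}_{\xi}&C_{\varphi}\\ -C_{\varphi}^{T}&0\end{smallmatrix}\bigr)$, where the lower right block vanishes ($W$ abelian), $C_{\varphi}\colon W\to\operatorname{sl}(n)^{*}$ depends linearly on $\varphi$ alone, $\ker C_{\varphi}^{T}=\mathfrak{g}_{\varphi}$ is the stabiliser of $\varphi$ in $\operatorname{sl}(n)$, and $\operatorname{Im}C_{\varphi}=\mathfrak{g}_{\varphi}^{\perp}$. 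A short linear-algebra computation with this block form gives $\dim\ker A_{(\xi,\varphi)}=\dim\ker\bigl(A^{\operatorname{sl}(n)}_{\xi}|_{\mathfrak{g}_{\varphi}}\bigr)+\dim\ker C_{\varphi}=\dim\ker A^{\mathfrak{g}_{\varphi}}_{\xi|_{\mathfrak{g}_{\varphi}}}+\dim\ker C_{\varphi}$, using that the restriction of a Lie--Poisson form to a subalgebra is the Lie--Poisson form of that subalgebra.

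This exhibits a self-similarity. For $k<n$ a generic $\varphi$ consists of $k$ independent covectors, so $\ker C_{\varphi}=0$ and, taking $\varphi_{i}=e_{i}^{*}$, $\mathfrak{g}_{\varphi}$ is the algebra of matrices with last $k$ rows zero intersected with $\operatorname{sl}(n)$, i.e.\ $\mathfrak{g}_{\varphi}\cong\operatorname{sl}(n-k)\ltimes\left(\mathbb{C}^{\,n-k}\right)^{k}$. Since $\xi|_{\mathfrak{g}_{\varphi}}$ is then generic, $\operatorname{ind}\mathfrak{g}=\operatorname{ind}\mathfrak{g}_{\varphi}$; writing $\mathfrak{g}_{(m)}=\operatorname{sl}(m)\ltimes\left(\mathbb{C}^{m}\right)^{k}$ and iterating down to $\mathfrak{g}_{(r)}$ with $r=n\bmod k$, then computing $\operatorname{ind}\mathfrak{g}_{(r)}=kr-r^{2}+1$ from the block form (now $\mathfrak{g}_{\varphi}=0$ but $C_{\varphi}$ has a nontrivial cokernel), we get $\operatorname{ind}\mathfrak{g}=kr-r^{2}+1$, which equals $k$ exactly when $(r-1)(k-r-1)=0$ — precisely hypothesis \eqref{Eq:R}. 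For the absence of Jordan blocks, the same block form shows that if $\operatorname{rank}\varphi=k-j$ with $j\ge1$ then $\dim\ker C_{\varphi}=nj\ge n>k=\operatorname{ind}\mathfrak{g}$, so the locus of degenerate $\varphi$ lies entirely in $\operatorname{Sing}\mathfrak{g}$, of codimension $n-k+1\ge2$ (this is where $k<n$ is used); off this locus $\operatorname{Sing}\mathfrak{g}$ is the preimage of $\operatorname{Sing}\mathfrak{g}_{\varphi}$ under the surjection $\operatorname{sl}(n)^{*}\to\mathfrak{g}_{\varphi}^{*}$, of codimension $\operatorname{codim}\operatorname{Sing}\mathfrak{g}_{(n-k)}$, while the base case satisfies $\operatorname{codim}\operatorname{Sing}\mathfrak{g}_{(r)}\ge2$ (again from the block form, or from \cite{Vor2}); induction on $d$ yields $\operatorname{codim}\operatorname{Sing}\mathfrak{g}\ge2$, so $\mathfrak{g}$ has no Jordan blocks.

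It remains to show that the $k$ Kronecker blocks are equal, which forces their common size to be $\dim\mathfrak{g}/k=(d+1)(n+r)-1$ (the division being exact precisely under \eqref{Eq:R}). For this I would prove a reduction lemma: for $k<n$, the Jordan--Kronecker invariants of $\mathfrak{g}_{(n)}$ are obtained from those of $\mathfrak{g}_{(n-k)}$ by enlarging every Kronecker block by $2n$. Since $k<n$, the degeneracy locus of $\varphi$ has codimension $\ge2$ and is missed by a generic pencil line, so the family of stabilisers $\mathfrak{g}_{\varphi+\lambda\psi}$ stays isomorphic to $\mathfrak{g}_{(n-k)}$ for all $\lambda\in\mathbb{C}P^{1}$; choosing a $\lambda$-dependent basis of $\mathfrak{g}$ adapted to the splittings $\operatorname{sl}(n)^{*}=\mathfrak{g}_{\varphi+\lambda\psi}^{*}\oplus\mathfrak{g}_{\varphi+\lambda\psi}^{\perp}$ and $\operatorname{Im}C_{\varphi+\lambda\psi}=\mathfrak{g}_{\varphi+\lambda\psi}^{\perp}\cong W$ puts $A_{x+\lambda a}$ into a block form whose $\mathfrak{g}_{(n-k)}$-part is the pencil of $\mathfrak{g}_{(n-k)}$ and whose remaining $2nk$ coordinates — the $nk$ of $W$ together with the $nk$ of $\mathfrak{g}_{\varphi}^{\perp}$, each a copy of $\left(\mathbb{C}^{n}\right)^{k}$ — attach, $2n$ at a time, to the $k$ blocks. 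Granting this, induction on $d$ finishes: the base case $r\in\{1,k-1\}$ has $k$ equal blocks (for $r=1$, $\mathfrak{g}_{(1)}=\mathbb{C}^{k}$ is abelian with $k$ blocks of size $1$; for $r=k-1$, \cite{Vor2} gives $k$ blocks of size $2k-3$), each step adds $2n$ to every block, and the accumulated increment $2\sum_{i=0}^{d-1}(n-ik)=d(n+r+k)$ turns the base size $2r-1$ into $(d+1)(n+r)-1$.

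The substantive step, and the main obstacle, is the reduction lemma: carrying the pencil through the $\lambda$-dependent change of basis and checking that the extra $2nk$ coordinates glue evenly ($2n$ per block) onto the existing Kronecker blocks rather than forming new blocks or creating Jordan blocks at the finitely many $\lambda$ with $(\xi+\lambda\eta)|_{\mathfrak{g}_{\varphi+\lambda\psi}}\in\operatorname{Sing}\mathfrak{g}_{(n-k)}$. The codimension bounds above are exactly what exclude the bad alternatives, but the bookkeeping is where the real work lies. (An alternative would be to write down the Casimirs of the pencil on $\mathfrak{g}$ explicitly — from traces of $\xi+\lambda\eta$ and from minors of the covectors $\varphi_{a}+\lambda\psi_{a}$ and their $(\xi+\lambda\eta)$-iterates — and read the block sizes off the $\lambda$-degrees, but this meets the same bookkeeping.)
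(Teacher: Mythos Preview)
Your plan takes a genuinely different route from the paper's. The paper does not attempt any inductive reduction along the chain $\mathfrak{g}_{(n)}\to\mathfrak{g}_{(n-k)}\to\cdots$. Instead it quotes the explicit coadjoint invariants already identified in \cite{Vor2}: the determinants $\det M_y$, where the right-most $r$ columns of $M_y$ are an $r$-element subset of the $k$ columns of $(Y^T)^d L$. There are $\binom{k}{r}$ such determinants, and $\binom{k}{r}=k$ precisely when $r\in\{1,k-1\}$; in those cases all $k$ determinants have the same degree $\tfrac12(d+1)(n+r)$, their degree sum equals $\tfrac12(\dim\mathfrak{g}+\operatorname{ind}\mathfrak{g})$, and since $\mathfrak{g}$ is already known from \cite{Vor2} to be of Kronecker type, \cite[Theorem~9 and Remark~6]{BolsZhang} converts the degrees directly into block sizes $2\deg f_i-1$. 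That is the entire proof. It is exactly the ``alternative'' you sketch in your last paragraph and set aside as the same bookkeeping; in fact, once Vorushilov's description of the invariants and Vorontsov's completeness result \cite{Vorontsov09} are granted, there is essentially nothing left to check.

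Your structural approach is more ambitious and, if the reduction lemma held, would give a conceptual explanation independent of knowing the invariants in closed form. But the reduction lemma is a genuine gap, not deferred bookkeeping. The $\lambda$-dependent change of basis you propose requires the family of stabilisers $\mathfrak{g}_{\varphi+\lambda\psi}\subset\operatorname{sl}(n)$ to admit a global trivialisation over $\mathbb{C}P^1$, which is not automatic (a vector subbundle of a trivial bundle on $\mathbb{C}P^1$ need not be trivial); and even granting this, the claim that the extra $2nk$ coordinates attach as exactly $2n$ to each existing block --- rather than in any other pattern with the same total --- needs an argument you have not supplied. Rank and dimension alone do not pin down the Kronecker sizes; the even distribution is precisely what encodes the equality of the invariant degrees, so at that point you are back to the content of the paper's argument. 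Your computation of $\operatorname{ind}\mathfrak{g}$ and the codimension-$\ge 2$ bound on $\operatorname{Sing}\mathfrak{g}$ are correct and recover Ra\"is-type information, but neither step determines the individual block sizes without the missing lemma.
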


\begin{proof} As it was shown in \cite{Vor2}, the Jordan--Kronecker invariants for the considered Lie algebras consist only of Kronecker blocks. Following the notation of \cite{Vor2}, the invariants of coadjoint representation have the form $\operatorname{det} M_y$, where \[ M_y = \left( \begin{matrix} L, \quad Y^T L, \quad \dots, \quad (Y^T)^{d-1}L, \quad (Y^T)^{d}l_{i_1}, \dots  (Y^T)^{d}l_{i_r}\end{matrix} \right), \qquad L = \left( \begin{matrix}l_1, \dots, l_k \end{matrix} \right). \]  The $r$ right columns of $M_y$ are chosen from the $k$ columns of $\left(Y^T\right)^d L$. By \cite[Theorem 5]{Vorontsov09} a complete family of independent invariants can be chosen among these polynomials. If $r$ satisfies \eqref{Eq:R}, then there are $k$ such invariants $f_i$, each with degree \[\operatorname{deg} f_i = \frac{(d+1)(n+r)}{2}.\] Since $\operatorname{ind}\mathfrak{g} = k$, the polynomials $f_i$ are algebraically independent  and satisfy \[ \sum_{i=1}^k \deg f_i = \frac{1}{2}\left(\dim \mathfrak{g} + \operatorname{ind}\mathfrak{g}\right). \]  As $\mathfrak{g}$ is of of Kronecker type, by \cite[Theorem 9 and Remark 6]{BolsZhang}, the Jordan--Kronecker invariants of $\mathfrak{g}$ consist of $k$ Kronecker blocks with sizes $2 \operatorname{deg}f_i - 1$, as required. \end{proof}

Note that the Jordan--Kronecker invariants correspond to the generic skew-symmetric matrix pencils with fixed rank (see \cite{Dmytryshyn18}). We conjecture this holds for all cases. 

\begin{conjecture} Consider the semi-direct sums $\mathfrak{g} = \operatorname{sl}(n) \ltimes \left(\mathbb{C}^n\right)^k$, such that \eqref{Eq:KCond} holds and \[ 1 < r < k -1. \] Then the Jordan--Kronecker invariants of $\mathfrak{g}$ have no Jordan blocks and consist of \[\operatorname{ind} \mathfrak{g} = kr - r^2 +1 \] Kronecker blocks of "almost equal size" (i.e. their sizes do not differ more than by $1$). Namely, if \[ \frac{\operatorname{dim} \mathfrak{g} + \operatorname{ind} \mathfrak{g}}{2} = l \operatorname{ind} \mathfrak{g} + b, \qquad 0 \leq b < \operatorname{ind} \mathfrak{g},\] then there are 

\begin{itemize}
\item $b$ Kronecker blocks with size $2l+1$,

\item $\operatorname{ind}\mathfrak{g} - b$  Kronecker block with size $2l-1$.
    
\end{itemize}
    
\end{conjecture}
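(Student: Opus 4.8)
The plan is to reduce the conjecture, via \cite[Theorem 9 and Remark 6]{BolsZhang}, to a statement about the associated skew-symmetric pencil, and then to prove that statement by exhibiting one sufficiently generic member. Two structural inputs are already available or routine. First, $\mathfrak g$ is of Kronecker type — its generic pencil has no Jordan part — by \cite{Vor2}; hence the Jordan--Kronecker invariants consist of $s=\operatorname{ind}\mathfrak g$ Kronecker blocks whose (automatically odd) sizes $2n_1-1\ge\cdots\ge 2n_s-1$ satisfy $\sum_i n_i=\tfrac12(\dim\mathfrak g+\operatorname{ind}\mathfrak g)$, and it remains only to pin down the individual $n_i$. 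Second, $\operatorname{ind}\mathfrak g=kr-r^2+1$ should follow from Ra\"is's formula for the index of a semidirect sum together with the self-similarity $\mathfrak g_\lambda\cong\operatorname{sl}(n-k)\ltimes(\mathbb C^{n-k})^k$ for generic $\lambda\in((\mathbb C^n)^*)^k$ and $k<n$: the stabiliser of $k$ generic covectors in $\operatorname{sl}(n)$ is exactly the algebra of traceless endomorphisms with image in the $(n-k)$-dimensional common kernel, and that algebra is again of our type with $n$ replaced by $n-k$; iterating $d$ times reduces to $k\ge m$, where $\mathfrak g_\lambda=0$ and the index equals $km-m^2+1$ outright.

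For the block sizes I would pass to the skew-symmetric pencil $\mathcal P=\mathcal A_x+\lambda\mathcal A_a$ on $\mathfrak g$, writing $x=(Y,L)$, $a=(Y_0,L_0)$ in $\operatorname{sl}(n)^*\oplus((\mathbb C^n)^*)^k$: the $(\mathbb C^n)^k\times(\mathbb C^n)^k$ block of $\mathcal A_\xi$ vanishes, the $\operatorname{sl}(n)\times\operatorname{sl}(n)$ block is the Lie bracket form at $Y$, and the mixed block is given by the standard representation through $L$. A polynomial vector $(B(\lambda),w(\lambda))$ in $\ker\mathcal P$ must satisfy $B(\lambda)^{\!\top}(\ell_j+\lambda\ell_{0,j})=0$ for $j=1,\dots,k$ — i.e. $\operatorname{im}B(\lambda)$ lies in the $(n-k)$-dimensional space $W(\lambda)=\bigcap_j\ker(\ell_j+\lambda\ell_{0,j})$ over $\mathbb C(\lambda)$ — together with $[B(\lambda),Y+\lambda Y_0]$ lying (mod scalars) in the column span of the $\ell_j+\lambda\ell_{0,j}$ and $\operatorname{tr}B(\lambda)=0$; the $w(\lambda)$ component is then determined. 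The Kronecker minimal indices of $\mathcal P$ are the degrees in $\lambda$ at which such $B(\lambda)$ first appear, so the task becomes the explicit linear-algebra problem of computing, for each $t$, the dimension of the space of admissible $B(\lambda)$ of degree $\le t$ once $(Y,L,Y_0,L_0)$ is put in a convenient normal form. Note that $\{B:\operatorname{im}B\subseteq W(\lambda)\}$ is a $\mathbb C(\lambda)$-copy of $\operatorname{sl}(n-k)\ltimes(\mathbb C^{n-k})^k$, which is why the index recursion reappears here.

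The decisive step — and the expected main obstacle — is to show that these dimensions grow as slowly as numerically possible, i.e. that the minimal indices are balanced: $\varepsilon_i:=n_i-1\in\{l-1,l\}$ with $l=\lfloor\tfrac{\dim\mathfrak g+\operatorname{ind}\mathfrak g}{2\operatorname{ind}\mathfrak g}\rfloor$ and exactly $b$ of them equal to $l$. As noted after the Theorem, this is precisely the assertion that $\mathcal P$ is Kronecker-generic among skew-symmetric pencils of size $\dim\mathfrak g$ and corank $\operatorname{ind}\mathfrak g$ (cf. \cite{Dmytryshyn18}). Since the balanced Kronecker type is a Zariski-open condition on that irreducible variety of pencils, and the pencils $\mathcal A_{(Y,L)}+\lambda\mathcal A_{(Y_0,L_0)}$ form an irreducible family all of whose members have corank $\ge\operatorname{ind}\mathfrak g$, it suffices to produce a single pair $(Y,L,Y_0,L_0)$ whose pencil realises this structure; genericity then propagates, and \cite[Theorem 9 and Remark 6]{BolsZhang} translates the conclusion about $\mathcal P$ back into the stated Jordan--Kronecker invariants of $\mathfrak g$. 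The genuinely hard part is the construction and verification of such a witness — one must arrange $(Y^{\!\top},L)$ so that the observability chain $L,\,Y^{\!\top}L,\,\dots$ is distributed among the $kr-r^2+1$ kernel directions as evenly as $n=kd+r$ permits, and then carry out the rank count above. I would attempt this by induction on $d$ through the self-similar reduction, tracking how each minimal index changes under $m\mapsto m+k$, with the base of the induction supplied by the already-understood range $r<k$ and by small cases such as $(k,r)=(4,2)$, which can be settled by direct computation.
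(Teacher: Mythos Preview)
The statement you are attempting to prove is labelled \emph{Conjecture} in the paper, and the paper offers no proof of it; there is therefore nothing to compare your argument against. The author proves the Theorem only for $r\in\{1,k-1\}$, observes that in those cases the block structure happens to coincide with the generic skew-symmetric pencil of the appropriate rank, and then \emph{conjectures} that the same coincidence persists for $1<r<k-1$.

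Your outline is a sensible strategy, and the reductions you describe (Kronecker type from \cite{Vor2}; the index via Ra\"is and the recursion $\mathfrak g_\lambda\cong\operatorname{sl}(n-k)\ltimes(\mathbb C^{n-k})^k$; upper-semicontinuity of Kronecker structure so that a single good witness suffices) are all reasonable. But the proposal is not a proof: you yourself identify the ``genuinely hard part'' as the construction and verification of a specific $(Y,L,Y_0,L_0)$ whose pencil has balanced minimal indices, and you do not carry this out. The inductive scheme you sketch---tracking how each minimal index changes under $n\mapsto n+k$---is exactly where the difficulty lives, and nothing in your write-up explains why the indices should stay balanced through that step rather than, say, acquiring a spread that grows with $d$. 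The cases $r=1$ and $r=k-1$ in the paper succeed precisely because one can write down $k$ invariants of equal degree and invoke \cite[Theorem~9]{BolsZhang}; for intermediate $r$ the determinantal invariants $\det M_y$ have unequal degrees and do not obviously yield a balanced decomposition, which is why the author stops at a conjecture. Until the witness is actually produced and checked (or the induction made to work), what you have is a plan of attack, not a proof.
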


The author would like to thank A.\,V.~Bolsinov and A.\,M.~Izosimov for useful comments.

\end{document}